\documentclass[a4paper,12pt]{article}
\usepackage{amsmath,amsthm,amssymb}
\usepackage{mathtools}
\usepackage{graphicx}
\usepackage{comment}
\usepackage{hyperref}
\usepackage[margin=1in]{geometry}

\newtheorem{corollary}{Corollary}

\newtheorem{lemma}{Lemma}
\newtheorem{observation}{Observation}

\newtheorem{proposition}{Proposition}
\newtheorem{remark}{Remark}
\newtheorem{theorem}{Theorem}

\DeclareMathOperator*{\argmax}{arg\,max}

\usepackage[OT1,euler-digits]{eulervm}

\begin{document}
\title{Growth of recurrences with mixed multifold convolutions}

\author{Vuong Bui\thanks{LIRMM, Universit\'e de Montpellier, CNRS, 161 Rue Ada, 34095 Montpellier, France and UET, Vietnam National University, Hanoi, 144 Xuan Thuy Street, Hanoi 100000, Vietnam  (\href{mailto://bui.vuong@yandex.ru}{\texttt{bui.vuong@yandex.ru}})}}
\date{}
\maketitle
\begin{abstract}
Generalizing some popular sequences like Catalan's number, Schr\"oder's number, etc, we consider the sequence $s_n$ with $s_0=1$ and for $n\ge 1$,
\begin{multline*}
    s_n=\sum_{x_1+\dots+x_{\ell_1}=n-1} \kappa_1 s_{x_1}\dots s_{x_{\ell_1}} + \dots +\sum_{x_1+\dots+x_{\ell_{t'}}=n-1} \kappa_{t'} s_{x_1}\dots s_{x_{\ell_{t'}}}+\\ \max_{x_1+\dots+x_{\ell_{t'+1}}=n-1} \kappa_{t'+1} s_{x_1}\dots s_{x_{\ell_{t'+1}}} + \dots + \max_{x_1+\dots+x_{\ell_t}=n-1} \kappa_t s_{x_1}\dots s_{x_{\ell_t}},
\end{multline*}
where $x_i$ are nonnegative integers, $\ell_1,\dots,\ell_t$ are positive integers, and $\kappa_1,\dots,\kappa_t$ are positive reals. 
We show that it is possible to compute the growth rate $\lambda$ of $s_n$ to any precision.
In particular, for every $n\ge 2$,
    \[
        \sqrt[n]{\frac{\kappa^*}{\mathcal L(n-1) s_1} s_n} \le \lambda \le \sqrt[n]{3^{18\log 3 + 2\log\frac{s_1\mathcal L^2}{\kappa^*}}  n^{3\log n + 12\log 3 + \log\frac{s_1\mathcal L^2}{\kappa^*}} s_n},
    \]
    where $\mathcal L=\max_i \ell_i$ and $\kappa^*=\kappa_i$ for some $i$ with $\ell_i\ge 2$, and the logarithm has the base $\frac{\mathcal L+1}{\mathcal L}$. The constants in the inequalities are not very well optimized and serve mostly as a proof of concept with the ratio of the upper bound and the lower bound converging to $1$ as $n$ goes to infinity.
\end{abstract}

\section{Introduction}
Some of the most popular and well-studied recurrences with convolutions include Catalan's number: $C_0=1$ and for $n\ge 1$,
\[
    C_{n}=\sum _{i=0}^{n-1}C_{i}C_{n-1-i},
\]
and
Schr\"oder's number: $S_0=1$ and for $n\ge 1$,
\[
    S_{n}=S_{n-1}+\sum _{i=0}^{n-1}S_{i}S_{n-1-i}.
\]
If there are an arbitrary number $k$ of folds in the recurrence as in: $T_0=1$ and for $n\ge 1$,
\[
    T_n=\sum_{\substack{n_1\ge 0, \dots, n_k\ge 0\\ n_1+\dots+n_k=n-1}} T_{n_1}\dots T_{n_k},
\]
then studying the sequence is already nontrivial, as suggested by Graham, Knuth, Patashnik in Section $7.5$ of the popular book ``Concrete Mathematics'' \cite{graham1989concrete}. They argued that none of the standard techniques can be applied to the equations of generating functions with high degrees. Instead, to study $T_n$ they employ the generalized Raney lemma \cite{raney1960functional} and give a closed-form solution to $T_n$ with a combinatorial interpretation. Although it is a very beautiful approach, we believe that it is much harder to apply it to more general sequences.
For example, consider the following sequence $\{s_n\}_n$ with $s_0=1$ and for $n\ge 1$,
\begin{multline*}
    s_n=\sum_{a+b=n-1} 2s_a s_b + \sum_{a+b+c=n-1} 3s_as_bs_c + \sum_{a+b+c+d=n-1} 4 s_as_bs_cs_d +\\
    \max_{a+b+c+d+e=n-1} 5 s_as_bs_cs_ds_e + \max_{a+b+c+d+e+f=n-1} 6 s_as_bs_cs_ds_es_f,
\end{multline*}
where the numbers $2,3,4,5,6$ can be arbitrary in general. We do not write explicitly that $a,b,c,d,e,f$ are nonnegative integers and we apply the same practice throughout the article.

Generating function techniques will not succeed in solving this recurrence since the degree of the equation is $6$ (if we replace maximum by summation), beside the fact that the maximum operator complicates the already unpleasant situation. Therefore, it is hard (if not impossible) to obtain a closed-form growth rate of $\sqrt[n]{s_n}$ in general where the coefficients are arbitrary. However, we can approximate the growth rate to any precision. Before doing that, we generalize the formula with $s_0=1$ and for $n\ge 1$,
\begin{multline*}
    s_n=\sum_{x_1+\dots+x_{\ell_1}=n-1} \kappa_1 s_{x_1}\dots s_{x_{\ell_1}} + \dots +\sum_{x_1+\dots+x_{\ell_{t'}}=n-1} \kappa_{t'} s_{x_1}\dots s_{x_{\ell_{t'}}}+\\ \max_{x_1+\dots+x_{\ell_{t'+1}}=n-1} \kappa_{t'+1} s_{x_1}\dots s_{x_{\ell_{t'+1}}} + \dots + \max_{x_1+\dots+x_{\ell_t}=n-1} \kappa_t s_{x_1}\dots s_{x_{\ell_t}},
\end{multline*}
where $\ell_1,\dots,\ell_t$ are positive integers, and $\kappa_1,\dots,\kappa_t$ are positive reals. We assume some $\ell_i\ge 2$, otherwise it is just a linear recurrence.  We allow mixing some maximum operators since the techniques in the article still work well with them. Meanwhile, it makes the traditional approaches probably not applicable here.

Note that this generalization does not cover some popular sequences, for example, Motzkin's numbers with $M_0=1$ and for $n\ge 1$,
\[
    M_{n}=M_{n-1}+\sum _{i=0}^{n-2}M_{i}M_{n-2-i}.
\]
In other words, this article will not treat the situation where the sum of variables is $n-\delta$ for some $\delta\ne 1$ since the growth rate of $s_n$ may not be a limit, i.e., $\sqrt[n]{s_n}$ may no longer converge. It is possible that we need much more care to deal with $\limsup_{n\to\infty} \sqrt[n]{s_n}$ in that general case, for which we would avoid for the sake of simplicity.

In this article, we are interested in the exponential growth of $s_n$. It is quite straightforward to see that the limit
\[
    \lambda\coloneqq \lim_{n\to\infty} \sqrt[n]{s_n}
\]
exists. Indeed, we first have the following proposition.
\begin{proposition} \label{prop:supermulti}
    Let $i$ be so that $\ell_i\ge 2$. Denote $\kappa^*=\kappa$. For every $u\ge 0,v\ge 0$, we have
    \[
        s_u s_v \le \frac{1}{\kappa^*} s_{u+v+1}.
    \]
\end{proposition}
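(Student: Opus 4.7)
The plan is to lower-bound $s_{u+v+1}$ by isolating a single, carefully chosen term of its defining recurrence that already equals $\kappa^{*} s_{u} s_{v}$, and then to discard every remaining term by nonnegativity. Since all coefficients $\kappa_{j}$ are positive and $s_{0}=1$, a trivial induction on $n$ shows that $s_{n}\ge 0$ (in fact $s_{n}>0$) for every $n$. In particular, each summation on the right-hand side of the recurrence is a sum of nonnegative quantities, and each maximum is at least as large as its value at any specific feasible tuple of indices.

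Now I fix an index $i$ with $\ell_{i}\ge 2$ witnessing $\kappa^{*}=\kappa_{i}$ and look at the $i$th term in the recurrence for $s_{u+v+1}$, which is well-defined because $u+v+1\ge 1$. The key observation is that the tuple
\[
    (x_{1},x_{2},x_{3},\dots,x_{\ell_{i}}) = (u,v,0,\dots,0)
\]
is a legal choice: its entries are nonnegative integers summing to $u+v=(u+v+1)-1$. The monomial corresponding to this tuple equals $\kappa_{i}\, s_{u} s_{v} s_{0}^{\ell_{i}-2}=\kappa^{*} s_{u} s_{v}$, using $s_{0}=1$ and the fact that $\ell_{i}-2\ge 0$. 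Hence the $i$th term, whether it is a summation or a maximum, is bounded below by $\kappa^{*} s_{u} s_{v}$.

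Dropping all the other (nonnegative) terms from the recurrence, I obtain $s_{u+v+1}\ge \kappa^{*} s_{u} s_{v}$, which rearranges to the stated inequality. The only nontrivial prerequisite is the nonnegativity of all $s_{n}$, needed to justify discarding the other summations and other $\max$-terms; this is the one place where the structural hypotheses (positivity of the $\kappa_{j}$ and $s_{0}=1$) are used. No genuine obstacle is expected, since the proof reduces to evaluating a specific feasible tuple inside a single term of the recurrence.
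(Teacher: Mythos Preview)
Your proof is correct and follows essentially the same route as the paper: both arguments isolate the $i$th term of the recurrence for $s_{u+v+1}$, evaluate it at the tuple $(u,v,0,\dots,0)$, and use $s_0=1$ together with nonnegativity to discard everything else. You are slightly more explicit than the paper in justifying why the remaining terms can be dropped, but the underlying idea is identical.
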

\begin{proof}
    Regardless of the summation/maximum operator, we have
    \[
        s_n\ge\max_{x_1+\dots+x_{\ell_i}=n-1} \kappa^* s_{x_1}\dots s_{x_{\ell_i}}.
    \] 
    It follows from setting $x_3=\dots=x_{\ell_i}=0$ that
    \[
        s_n\ge \max_{x_1+x_2=n-1} \kappa^* s_{x_1} s_{x_2}.
    \]
    In other words, for every $u,v$,
    \[
        s_{u+v+1} \ge \kappa^* s_u s_v.\qedhere
    \]
\end{proof}
Rewriting Proposition \ref{prop:supermulti}, we have for every $u,v\ge 1$,
\[
    \kappa^* s_{u+v-1} \ge \kappa^* s_{u-1} s_{v-1}.
\]
In other words, the sequence $\{s'_n\}_{n\ge 1}$ where $s'_n=\kappa^* s_{n-1}$ is supermultiplicative. The following corollary follows from Fekete's lemma \cite{fekete1923verteilung} for the sequence $s'_n$ (note that the sequence $s'_n$ is also positive).
\begin{corollary} \label{cor:lim-exists}
The following limit exists and can be expressed as:
\[
    \lim_{n\to\infty} \sqrt[n]{s_n}=\lim_{n\to\infty} \sqrt[n]{s'_n} = \sup_{n\ge 1} \sqrt[n]{\kappa^*s_{n-1}}.
\]
\end{corollary}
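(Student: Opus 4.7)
The plan is to apply Fekete's lemma to the positive sequence $\{s'_n\}_{n\ge 1}$ defined by $s'_n=\kappa^* s_{n-1}$, once I check it is genuinely supermultiplicative. Starting from Proposition~\ref{prop:supermulti}, I would multiply $\kappa^* s_u s_v \le s_{u+v+1}$ by $\kappa^*$ and set $m=u+1$, $n=v+1$, turning the inequality into $s'_m s'_n \le s'_{m+n}$ for all $m,n\ge 1$, which is exactly what is needed.

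Next I would invoke Fekete's lemma in its multiplicative form: for any positive sequence satisfying $a_m a_n \le a_{m+n}$, passing to $\log a_n$ reduces matters to the classical subadditive statement applied to $-\log a_n$, yielding $\lim_{n\to\infty}\sqrt[n]{a_n}=\sup_{n\ge 1}\sqrt[n]{a_n}$, with the limit lying in $(0,\infty]$. Applied to $a_n=s'_n$, this immediately delivers the second equality of the corollary.

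For the first equality I would compare the two $n$-th roots directly. Writing $\sqrt[n]{s'_n}=(\kappa^*)^{1/n}\, s_{n-1}^{1/n}$ and noting $(\kappa^*)^{1/n}\to 1$, the question reduces to showing that $s_n^{1/n}$ and $s_{n-1}^{1/n}$ share the same limit. This follows from the identity $s_{n-1}^{1/(n-1)}=\bigl(s_{n-1}^{1/n}\bigr)^{n/(n-1)}$ together with $n/(n-1)\to 1$, so a one-index shift transfers the existence of $\lim\sqrt[n]{s'_n}$ to $\lim\sqrt[n]{s_n}$ and pins down the common value.

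There is no real obstacle here; the whole argument is routine once supermultiplicativity of $s'_n$ is observed, and the only care needed is to track the index shift $n\leftrightarrow n-1$ relating $s_n$ to the properly supermultiplicative sequence $s'_n$.
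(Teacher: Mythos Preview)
Your proposal is correct and follows exactly the paper's approach: the paper also derives supermultiplicativity of $s'_n=\kappa^* s_{n-1}$ from Proposition~\ref{prop:supermulti} and then invokes Fekete's lemma, leaving the index-shift verification of the first equality implicit. Your write-up simply fills in those routine details.
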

Although proving the lower bound $\sqrt[n]{\kappa^*s_{n-1}}$ is so simple and it can converge to $\lambda$ fairly reasonably fast, providing a rigorous upper bound is not so straightforward.
In the following main merit of the article, we show that the limit can be computed to any precision by a pair of matching lower and upper bounds.
\begin{theorem} \label{thm:main}
    Let $\mathcal L=\max_i \ell_i$ and $\kappa^*=\kappa_i$ for some $i$ with $\ell_i\ge 2$. For every $n\ge 2$, we have
    \[
        \sqrt[n]{\frac{\kappa^*}{\mathcal L(n-1) s_1} s_n} \le \lambda \le \sqrt[n]{3^{18\log 3 + 2\log\frac{s_1\mathcal L^2}{\kappa^*}}  n^{3\log n + 12\log 3 + \log\frac{s_1\mathcal L^2}{\kappa^*}} s_n},
    \]
    where the logarithm has the base $\frac{\mathcal L+1}{\mathcal L}$.
\end{theorem}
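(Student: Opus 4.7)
The two inequalities will be handled in very different ways. The lower bound follows from Corollary~\ref{cor:lim-exists} once we control the growth ratio $s_n/s_{n-1}$ polynomially, which I plan to do by induction directly from the recurrence. The upper bound is harder; it will require a quantitative version of Fekete's lemma obtained from iterated use of Proposition~\ref{prop:supermulti}.

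\textbf{Lower bound.} By Corollary~\ref{cor:lim-exists} we have $\lambda^n \ge \kappa^* s_{n-1}$, so it suffices to show that $s_n \le \mathcal L(n-1)\,s_1\,s_{n-1}$ for every $n\ge 2$. I would prove this by strong induction on $n$: the base case $n=2$ is a short computation using $s_1=\sum_i\kappa_i$ giving $s_2\le \mathcal L s_1^2$. For the inductive step, expand the recurrence for $s_n$, dominate each maximum by the corresponding sum, and reparametrise each composition $(x_1,\dots,x_{\ell_i})$ of $n-1$ as $(y_1,\dots,y_{\ell_i})+e_p$ where $y$ is a composition of $n-2$ and $p$ is a position with $x_p\ge 1$. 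The key step is then the identity
\[
    \sum_{p\,:\,x_p\ge 1}\prod_j s_{x_j}\;=\;\sum_p s_{y_p+1}\prod_{j\ne p}s_{y_j}\;=\;\prod_j s_{y_j}\cdot\sum_p \frac{s_{y_p+1}}{s_{y_p}}.
\]
The inductive hypothesis on the ratios $s_{y_p+1}/s_{y_p}\le \mathcal L\,y_p\,s_1$, the edge case $y_p=0$ contributing a term $s_1$, and $\sum_p y_p=n-2$ together give $\sum_p s_{y_p+1}/s_{y_p}\le \mathcal L(n-1)s_1$. This yields $T_i^{(n)}\le \mathcal L(n-1)s_1\cdot T_i^{(n-1)}$ for each recurrence term $T_i$, and summing over $i$ closes the induction.

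\textbf{Upper bound.} This is equivalent to a lower bound $s_n\ge \lambda^n/C(n)$ with $C(n)=3^{O(1)}n^{O(\log n)}$. Iterated supermultiplicativity implies that for any $m_1+\dots+m_k=n+1$,
\[
    \kappa^* s_n \;\ge\;\prod_{j=1}^k (\kappa^* s_{m_j-1}) \;=\; \lambda^{n+1}\exp\Bigl(-\textstyle\sum_j m_j\,\varepsilon_{m_j}\Bigr),
\]
where $\varepsilon_m:=\log\lambda-\tfrac1m\log(\kappa^* s_{m-1})\ge 0$. Hence the task reduces to exhibiting a decomposition $\sum m_j=n+1$ for which $\sum_j m_j\,\varepsilon_{m_j}=O((\log n)^2)$. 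I plan to build this decomposition by a scale-doubling recursion: at each level argue that within a window $[m,\,m(\mathcal L+1)/\mathcal L]$ one can find an $m'$ reducing the effective $\varepsilon$ by a controlled multiplicative factor, and iterate $O(\log_{(\mathcal L+1)/\mathcal L}n)$ times to obtain the stated $(\log n)^2$ exponent; the logarithm base $(\mathcal L+1)/\mathcal L$ and the coefficient $3$ in $3\log n$ come out of the per-step geometry.

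\textbf{Main obstacle.} The crux of the upper bound is the per-scale improvement lemma for $\varepsilon_m$: supermultiplicativity alone yields Fekete convergence but no rate, so one must exploit the specific convolutional structure of the recurrence (in particular the existence of some $\ell_i\ge 2$ forcing nontrivial mixing between scales). Once that lemma is in hand, the telescoping decomposition and book-keeping for explicit constants such as $3^{18\log 3}$ and $12\log 3$ are essentially routine.
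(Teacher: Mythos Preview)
Your lower-bound argument is correct and, interestingly, different from the paper's. You prove $s_n\le \mathcal L(n-1)s_1 s_{n-1}$ by strong induction directly on the recurrence (the reparametrisation $x=y+e_p$ overcounts each composition of $n-1$ at least once, and $\sum_p s_{y_p+1}/s_{y_p}\le \mathcal L(n-1)s_1$ follows from the hypothesis plus the edge case $y_p=0$). The paper instead interprets $s_n$ through composition trees and obtains the same inequality combinatorially: every tree on $n\ge 2$ vertices has a one-vertex subtree, and there are at most $\mathcal L(n-1)$ places to reattach it. Both routes then combine with Corollary~\ref{cor:lim-exists}.

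The upper-bound plan, however, has a structural gap rather than just a missing lemma. Everything you propose runs through iterated supermultiplicativity, which in your notation says exactly that $g(m)\coloneqq m\varepsilon_m$ is subadditive. Hence for \emph{any} decomposition $\sum_j m_j=n+1$ one has $\sum_j g(m_j)\ge g(n+1)$, with equality for the trivial one-part decomposition; the inequality $\kappa^* s_n\ge \lambda^{n+1}\exp\bigl(-\sum_j g(m_j)\bigr)$ can therefore never beat the identity $\kappa^* s_n=\lambda^{n+1}e^{-g(n+1)}$. No ``per-scale improvement lemma for $\varepsilon_m$'' can be extracted from Proposition~\ref{prop:supermulti} alone, and your scale-doubling recursion has no input to work with.

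What the argument actually needs is the \emph{reverse} inequality: an upper bound $s_n\le \mathrm{poly}(n)\,s_j s_{n-j}$ for some balanced $j\in\bigl[\tfrac{n-1}{\mathcal L+1},\tfrac{\mathcal Ln+1}{\mathcal L+1}\bigr]$. The paper obtains this from a tree fact (every composition tree on $n$ vertices has a proper subtree whose size lies in that window), giving $s_n\le \mathcal Ln^2\max_j s_j s_{n-j}$ over that range. That submultiplicative-type bound is then bootstrapped by induction to $s_n\le (s_1\mathcal L^2 n^3/\kappa^*)^{\log_{(\mathcal L+1)/\mathcal L} n}\,s_m s_{n-m}$ for \emph{all} $0\le m\le n$, from which a weakly submultiplicative sequence $s'_n=C(n)s_n$ is built and the generalized Fekete lemma yields $\lambda=\inf_n\sqrt[n]{s'_n}$. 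The base $\tfrac{\mathcal L+1}{\mathcal L}$ enters precisely as the contraction ratio of the balanced-subtree window. Until you supply an analogue of this reverse inequality, the upper-bound half of your plan does not get off the ground.
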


We can compute $\lambda$ to any precision since the ratio between the upper bound and the lower bound converges to $1$ as $n$ goes to infinity. Loosely speaking, the order of the ratio is roughly the $n$-th root of $n$ to $\log n$. Meanwhile, $s_n$ can be computed in polynomial time. Although the constants are not really optimized, they are merely for a proof of concept and there is still room for further improvement.

The lower bound is slightly different from the one in Corollary \ref{cor:lim-exists}, as we desire $s_n$ to appear in both lower and upper bounds. Meanwhile, the upper bound employs
more delicate techniques inspired by those in \cite{bui2024asymptotic} and \cite{bui2024growth}. In fact, the author was also inspired by \cite{barequet2019improved}. We give the proof in Section \ref{sec:proof}.

\section{Proof of Theorem \ref{thm:main}}
\label{sec:proof}
We slightly disturb the notion of ``rooted tree'' by specifying the number of branches for each vertex separately. A branch might be empty and the order of the branches matters. Even a leaf also has a specified number of branches (and all these branches are empty).

A \emph{composition tree} is a rooted tree where each vertex is assigned some positive real among $\kappa_1,\dots,\kappa_t$ so that if a vertex is assigned some $\kappa_i$, then the number of branches of the vertex is $\ell_i$. The value of the composition tree is the product of the values of all vertices.

The relation between composition trees and $s_n$ is given in the following observation.

\begin{observation} \label{obs:tree-interpretion}
    The value of $s_n$ is at most the sum of the values of all composition trees with $n$ vertices. The equality is attained if there is no maximum operator in the recurrence of $s_n$.
\end{observation}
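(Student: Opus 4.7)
The plan is to prove the observation by induction on $n$, letting $T_n$ denote the sum of the values of all composition trees with $n$ vertices and showing that $s_n \le T_n$, with equality when the recurrence contains no $\max$ operators.

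For the base case $n=0$, the only composition tree with no vertices is the empty tree, whose value is the empty product $1=s_0$. For the inductive step, I would partition the composition trees with $n \ge 1$ vertices according to the label $\kappa_i$ of the root. Since a root labeled $\kappa_i$ has exactly $\ell_i$ ordered branches (each itself a composition tree, possibly empty) whose sizes sum to $n-1$, a simple counting argument yields
\[
    T_n = \sum_{i=1}^{t} \sum_{x_1+\dots+x_{\ell_i}=n-1} \kappa_i\, T_{x_1}\cdots T_{x_{\ell_i}}.
\]

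Now compare this with the recurrence defining $s_n$. For each $i > t'$, the maximum term satisfies
\[
    \max_{x_1+\dots+x_{\ell_i}=n-1} \kappa_i\, s_{x_1}\cdots s_{x_{\ell_i}} \;\le\; \sum_{x_1+\dots+x_{\ell_i}=n-1} \kappa_i\, s_{x_1}\cdots s_{x_{\ell_i}},
\]
since all quantities $s_m$ and $\kappa_i$ are positive. Combining these inequalities with the induction hypothesis $s_m \le T_m$ for $m < n$ gives
\[
    s_n \;\le\; \sum_{i=1}^{t} \sum_{x_1+\dots+x_{\ell_i}=n-1} \kappa_i\, s_{x_1}\cdots s_{x_{\ell_i}} \;\le\; \sum_{i=1}^{t} \sum_{x_1+\dots+x_{\ell_i}=n-1} \kappa_i\, T_{x_1}\cdots T_{x_{\ell_i}} \;=\; T_n.
\]
If no $\max$ operators appear (i.e.\ $t'=t$), the first inequality becomes an equality, and by the inductive hypothesis the second does as well, so $s_n = T_n$.

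There is no real obstacle here; the argument is a routine structural induction. The only point to be careful about is tracking that the branches of a vertex are ordered and may be empty (corresponding to $x_j=0$), which is already built into the definition of composition tree and matches the unrestricted nonnegative indexing in the recurrence. No additional combinatorial identities are required beyond recognizing that the positivity of $s_m$ turns each $\max$ into an upper bound by the corresponding sum.
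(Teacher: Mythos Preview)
Your argument is correct and is exactly the natural way to justify the observation: induct on $n$, decompose trees by the root label to obtain the convolution recurrence for $T_n$, and then compare term by term with the recurrence for $s_n$, using $\max \le \sum$ for the last $t-t'$ blocks. The paper itself offers no proof at all---it records the statement as an observation and moves on---so your write-up simply fills in what the paper left implicit, with the same underlying idea.
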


\begin{lemma}
    Every composition tree with $n$ vertices has a subtree with the number of vertices in the interval
    \[
        \left[\frac{n-1}{\mathcal L+1}, \frac{\mathcal Ln+1}{\mathcal L+1}\right].
    \]
\end{lemma}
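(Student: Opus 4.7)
The plan is a greedy root-to-leaf descent, very much in the spirit of locating a centroid. Starting from the root $v_0$ (whose subtree has size $k_0 = n$), at each step I move from the current vertex $v_i$ to the child whose subtree contains the most vertices, and I write $k_i$ for the size of the subtree rooted at $v_i$. Since $v_i$ is assigned some $\kappa_j$ and therefore has at most $\mathcal L$ branches whose subtree sizes sum to $k_i - 1$, the largest branch has size at least $(k_i - 1)/\mathcal L$, so
\[
    k_{i+1} \ge \frac{k_i - 1}{\mathcal L}.
\]
The sequence $(k_i)$ is strictly decreasing (a proper subtree has fewer vertices), so the descent must terminate, and in fact the size eventually drops to $1$, which lies below the upper threshold $\frac{\mathcal L n + 1}{\mathcal L + 1}$ whenever $n \ge 1$.

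Let $i^*$ be the \emph{first} index at which $k_{i^*} \le \frac{\mathcal L n + 1}{\mathcal L + 1}$; the upper half of the target interval is then automatic. For the lower half, if $i^* = 0$ the root itself already lies in the interval. Otherwise $i^* \ge 1$ and by the minimality of $i^*$ we have $k_{i^* - 1} > \frac{\mathcal L n + 1}{\mathcal L + 1}$; combining this with the descent inequality yields
\[
    k_{i^*} \ge \frac{k_{i^*-1} - 1}{\mathcal L} > \frac{1}{\mathcal L}\left(\frac{\mathcal L n + 1}{\mathcal L + 1} - 1\right) = \frac{n-1}{\mathcal L + 1},
\]
which places $k_{i^*}$ in the closed interval $\bigl[\frac{n-1}{\mathcal L+1},\ \frac{\mathcal L n+1}{\mathcal L+1}\bigr]$, as required.

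I do not expect a real obstacle. The only point that needs a sentence of care is that the descent step is legal whenever we are above the threshold: if $k_i \ge 2$ then the children's subtree sizes sum to $k_i - 1 \ge 1$, so at least one branch is non-empty and the largest branch is a genuine child to descend into. Once this observation is made, the whole lemma is a two-line calculation built on the bound $k_{i+1} \ge (k_i - 1)/\mathcal L$ and the choice of $i^*$ as a first-passage time.
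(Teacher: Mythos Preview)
Your proof is correct and is essentially the same centroid-type descent as the paper's, just run in the dual direction: the paper takes a minimal subtree of size at least $\frac{n-1}{\mathcal L+1}$ and bounds it above, while you descend along heaviest children until the size first drops to at most $\frac{\mathcal L n+1}{\mathcal L+1}$ and bound it below. Both rest on the single observation that a vertex with subtree size $k$ has at most $\mathcal L$ children whose subtree sizes sum to $k-1$.
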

\begin{proof}
Consider a minimal subtree $T$ with the number of vertices at least $\frac{n-1}{\mathcal L+1}$, in the sense that every of its proper subtrees has the number of vertices less than $\frac{n-1}{\mathcal L+1}$. Since the root of $T$ has at most $\mathcal L$ subtrees, the total number of vertices 
of $T$ is less than
    \[
        \mathcal L \frac{n-1}{\mathcal L+1} + 1 = \frac{\mathcal L n-\mathcal L + \mathcal L + 1}{\mathcal L+1} = \frac{\mathcal Ln+1}{\mathcal L+1}.
    \]
The subtree $T$ satisfies the requirement.
\end{proof}
\begin{remark}
    When $n\ge 2$, every such subtree is a proper subtree.
\end{remark}

\begin{proposition} \label{prop:decompose}
    If every composition tree of $n$ vertices has a proper subtree with the number of vertices in the range $R$ then
    \[
        s_n\le \sum_{i\in R} \mathcal L (n-i) s_{n-i} s_i.
    \]
\end{proposition}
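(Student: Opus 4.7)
The plan is to combine the tree interpretation of $s_n$ from Observation~\ref{obs:tree-interpretion} with an injective decomposition of each composition tree at its canonical proper subtree. By Observation~\ref{obs:tree-interpretion}, one has $s_n\le\sum_{|T|=n}f(T)$ where $f(T)=\prod_{v\in T}\kappa(v)$. For each such $T$, the hypothesis provides a proper subtree $S=S(T)$ of size $i=|S|\in R$; I would canonically pick one (for instance, via the descent procedure underlying the proof of the Lemma). Writing $p$ for its parent, $k$ for the branch index, and $T'=T\setminus S$ for the residual composition tree (which has $n-i$ vertices and an empty $k$-th branch at $p$), the map $T\mapsto(T',(p,k),S)$ is injective and $f(T)=f(T')f(S)$.

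Next I would establish the pointwise inequality $f(T'')\le s_{|T''|}$ for every composition tree $T''$ by a short induction on $|T''|$. Writing $\kappa_j$ for the root weight and $x_1,\dots,x_{\ell_j}$ for the sizes of its children subtrees, the inductive hypothesis gives $f(T'')=\kappa_j\prod_k f(\mathrm{child}_k)\le \kappa_j\prod_k s_{x_k}$, and this product is one summand of a sum-term (or is dominated by the corresponding maximum-term) of the recurrence defining $s_{|T''|}$, so it is at most $s_{|T''|}$.

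Combining the two ingredients, $f(T)=f(T')f(S)\le s_{n-i}\,s_i$ for every $T$ with $|S(T)|=i$. Summing over $T$ through the injection, parametrising by $i\in R$ and by the at most $\mathcal L(n-i)$ branch positions in each residual $T'$ (its $n-i$ vertices each admit at most $\mathcal L$ branches), and absorbing the residual and subtree values into $s_{n-i}$ and $s_i$ via the pointwise bounds, produces the claimed inequality
\[
    s_n \le \sum_{i\in R}\mathcal L(n-i)\,s_{n-i}\,s_i.
\]

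The main obstacle I anticipate is the final bookkeeping: since the injection already assigns each $T$ to a unique triple, one must arrange the summation so that only the position count $\mathcal L(n-i)$ (and not the separate numbers of possible residual trees or of subtrees of the relevant sizes) appears as the combinatorial prefactor, with the tree structure being absorbed entirely into the pointwise bounds $s_{n-i}$ and $s_i$. The natural remedy is to reorder the sum, first by $i\in R$ and then by the position $(p,k)$, and only then apply the pointwise bounds uniformly.
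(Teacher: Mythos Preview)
Your approach mirrors the paper's: start from Observation~\ref{obs:tree-interpretion}, decompose each composition tree at a canonical proper subtree, bound the number of attachment positions by $\mathcal L(n-i)$, and sum. The extra ingredient you introduce---the pointwise inequality $f(T'')\le s_{|T''|}$, proved by a clean induction on $|T''|$---is correct and is not in the paper.

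The gap is in the final bookkeeping, and your proposed remedy does not close it. Having $f(T)=f(T')f(S)\le s_{n-i}s_i$ for each individual $T$ and then summing over all composition trees of size $n$ produces $\sum_{T} s_{n-|S(T)|}s_{|S(T)|}$, which is far too large: there are exponentially many such $T$, not $\mathcal L(n-i)$ of them. If instead you pass through the injection and sum $f(T')f(S)$ over its full target, you obtain
\[
\sum_{i\in R}\mathcal L(n-i)\Bigl(\sum_{|T'|=n-i}f(T')\Bigr)\Bigl(\sum_{|S|=i}f(S)\Bigr),
\]
and no reordering converts the bracketed sums into $s_{n-i}$ and $s_i$: the pointwise bound goes the wrong way here, since $f(T')\le s_{n-i}$ does not yield $\sum_{T'}f(T')\le s_{n-i}$. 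What is actually required at this step is the \emph{identity} $\sum_{|T|=m}f(T)=s_m$, which by Observation~\ref{obs:tree-interpretion} holds precisely when the recurrence has no maximum terms. The paper's own one-paragraph proof is equally terse at exactly this point and tacitly relies on that identity; your pointwise bound, while a nice observation, does not substitute for it in the presence of $\max$ operators.
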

\begin{proof}
    If the proper subtree has $i$ vertices, then removing the subtree from the composition tree results in a tree with $n-i$ vertices. These $n-i$ vertices cannot have more than $\mathcal L(n-i)$ branches totally. Therefore, given a tree $T'$ of $n-i$ vertices, there should be at most $\mathcal L(n-i)$ ways to put a tree of $i$ vertices to be a branch of a vertex of $T'$ to form a composition tree of $n$ vertices. The conclusion follows by Observation \ref{obs:tree-interpretion}.
\end{proof}

We have the following direct corollaries.
\begin{corollary} \label{cor:history}
    For every $n\ge 2$, we set $R=\{1\}$ and obtain
    \[
        s_n\le \mathcal L (n-1) s_1 s_{n-1},
    \]
    where $s_1$ can be simply computed by $s_1=\sum_i \kappa_i$. 
\end{corollary}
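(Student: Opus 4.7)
The plan is to apply Proposition~\ref{prop:decompose} directly with $R=\{1\}$. To invoke that proposition I first need to check its hypothesis: every composition tree with $n\ge 2$ vertices must have a proper subtree of exactly one vertex. This is essentially trivial --- I would pick any vertex $v$ at maximum depth in the tree. Because $n\ge 2$, the tree has at least one non-root vertex, so such a $v$ can be chosen to be non-root, which makes the subtree rooted at $v$ a proper subtree. Moreover, maximality of depth forces all $\ell_j$ branches of $v$ to be empty, since otherwise there would be a strictly deeper vertex. Hence the subtree rooted at $v$ contains only $v$, meeting the requirement with the singleton range $R=\{1\}$.

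With the hypothesis verified, Proposition~\ref{prop:decompose} gives
\[
    s_n \le \sum_{i\in\{1\}} \mathcal L(n-i)\, s_{n-i}\, s_i = \mathcal L(n-1)\, s_1\, s_{n-1},
\]
which is the desired bound.

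It remains to identify $s_1$. Setting $n=1$ in the recurrence, the constraint $x_1+\dots+x_{\ell_i}=0$ over nonnegative integers has the unique solution $x_1=\dots=x_{\ell_i}=0$, so each summation block and each maximum block collapses to the single term $\kappa_i\, s_0^{\ell_i}=\kappa_i$. Summing over all $t$ blocks yields $s_1=\sum_i \kappa_i$.

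I do not anticipate any real obstacle: the statement is a one-step specialization of Proposition~\ref{prop:decompose}, and the only mildly nontrivial point is verifying the existence of a single-vertex proper subtree, handled by the maximum-depth argument above.
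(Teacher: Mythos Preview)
Your proposal is correct and follows exactly the paper's approach: apply Proposition~\ref{prop:decompose} with $R=\{1\}$. The paper simply states this as a direct corollary without verifying the hypothesis, whereas you spell out the (trivial) maximum-depth argument and the computation of $s_1$; both points are fine and require no further work.
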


\begin{corollary} \label{cor:rephrase-lower-bound}
\[
    \lim_{n\to\infty} \sqrt[n]{s_n}=\sup_{n\ge 2} \sqrt[n]{\frac{\kappa^*}{\mathcal L(n-1) s_1} s_n}.
\]
\end{corollary}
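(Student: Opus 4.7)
The plan is to derive this as an immediate consequence of the two previously established corollaries. Corollary \ref{cor:lim-exists} already identifies the limit as a supremum, $\lambda=\sup_{n\ge 1}\sqrt[n]{\kappa^* s_{n-1}}$, so the only task is to replace the supremand $\kappa^* s_{n-1}$ with the quantity $\frac{\kappa^*}{\mathcal L(n-1)s_1}s_n$ appearing in the stated formula, and this is exactly what Corollary \ref{cor:history} lets us do.

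First I would prove the $\le$ direction. Corollary \ref{cor:history} gives $s_n\le \mathcal L(n-1)s_1 s_{n-1}$ for every $n\ge 2$, which rearranges to
\[
    \frac{\kappa^*}{\mathcal L(n-1)s_1}s_n \le \kappa^* s_{n-1}.
\]
Taking $n$-th roots term by term and then taking the supremum over $n\ge 2$ gives
\[
    \sup_{n\ge 2}\sqrt[n]{\frac{\kappa^*}{\mathcal L(n-1)s_1}s_n} \le \sup_{n\ge 2}\sqrt[n]{\kappa^* s_{n-1}} \le \sup_{n\ge 1}\sqrt[n]{\kappa^* s_{n-1}} = \lambda,
\]
where the final equality uses Corollary \ref{cor:lim-exists}.

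For the $\ge$ direction, I would argue via the limit. Since $\sqrt[n]{\kappa^*/(\mathcal L(n-1)s_1)}\to 1$ as $n\to\infty$ and $\sqrt[n]{s_n}\to\lambda$, we have
\[
    \lim_{n\to\infty}\sqrt[n]{\frac{\kappa^*}{\mathcal L(n-1)s_1}s_n}=\lambda,
\]
so any supremum of this expression over $n\ge 2$ is at least $\lambda$. Combining the two directions yields the claimed equality. There is no real obstacle here; the only small care is just to note that the prefactor contributes trivially in the limit, so the supremum is not lost by restricting to $n\ge 2$.
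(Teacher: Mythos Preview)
Your proof is correct and follows essentially the same route as the paper: the key inequality $\frac{\kappa^*}{\mathcal L(n-1)s_1}s_n\le\kappa^* s_{n-1}$ from Corollary~\ref{cor:history}, combined with Corollary~\ref{cor:lim-exists}, gives $\sup_{n\ge 2}\sqrt[n]{\frac{\kappa^*}{\mathcal L(n-1)s_1}s_n}\le\lambda$. If anything your argument is slightly more complete, since the paper's own proof only writes out this $\le$ direction explicitly (which is all that is needed for Theorem~\ref{thm:main}), whereas you also spell out the reverse inequality via the trivial limit $\sqrt[n]{\kappa^*/(\mathcal L(n-1)s_1)}\to 1$.
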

\begin{proof}
    By Corollary \ref{cor:lim-exists},
    \[
        \lambda =\sup_{n\ge 1}\sqrt[n]{s'_n} =\sup_{n\ge 2}\sqrt[n]{s'_n} = \sup_{n\ge 2} \sqrt[n]{\kappa^* s_{n-1}} \\
        \ge \sup_{n\ge 2} \sqrt[n]{\frac{\kappa^*}{\mathcal L(n-1) s_1} s_n},
    \]
    where the inequality is due to Corollary \ref{cor:history}. Note that $\sup_{n\ge 1}\sqrt[n]{s'_n} =\sup_{n\ge 2}\sqrt[n]{s'_n}$ since $\sqrt{s'_2}\ge s'_1$ by the supermultiplicativity of $s'_n$.
\end{proof}

\begin{corollary} \label{cor:decompose}
    For $n\ge 2$, we have
    \[
        s_n\le \mathcal L n^2 s_j s_{n-j}
    \]
    where
    \[
        j = \argmax\limits_{i \in \left[\frac{n-1}{\mathcal L+1}, \frac{\mathcal Ln+1}{\mathcal L+1}\right]} s_i s_{n-i}.
    \]
\end{corollary}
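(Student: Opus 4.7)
The plan is to feed the range from the preceding lemma into Proposition \ref{prop:decompose} and then estimate the resulting sum very crudely by (number of summands) $\times$ (largest summand).

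First I would apply the lemma, together with the remark that for $n\ge 2$ every subtree it produces is proper, to conclude that every composition tree on $n$ vertices has a proper subtree whose size lies in $R=\left[\tfrac{n-1}{\mathcal L+1},\tfrac{\mathcal Ln+1}{\mathcal L+1}\right]$. Proposition \ref{prop:decompose} then gives
\[
    s_n \le \sum_{i\in R\cap\mathbb Z} \mathcal L(n-i)\, s_{n-i}\, s_i.
\]

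Next I would bound this sum term-wise. By the definition of $j$ as the argmax, every product $s_{n-i}s_i$ is at most $s_j s_{n-j}$, and every weight $\mathcal L(n-i)$ is at most $\mathcal L n$. It remains to bound the number of integers in $R$: for $n\ge 2$ one has $\tfrac{\mathcal Ln+1}{\mathcal L+1}<n$ (which is equivalent to $1<n$), so $R\subseteq(0,n)$ and $|R\cap\mathbb Z|\le n-1$. Multiplying these three estimates yields
\[
    s_n \le (n-1)\cdot \mathcal L n\cdot s_j s_{n-j}\le \mathcal L n^2\, s_j s_{n-j},
\]
which is exactly the claim. There is no real obstacle here: the corollary is bookkeeping on top of Proposition \ref{prop:decompose} and the subtree lemma, with the factor $n^2$ simply tracking $n$ summands each weighted by at most $\mathcal L n$.
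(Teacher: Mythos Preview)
Your argument is correct and is exactly the intended one: the paper simply labels this a ``direct corollary'' of the subtree lemma (with the remark that the subtree is proper for $n\ge 2$) and Proposition~\ref{prop:decompose}, and your crude bound (at most $n-1$ summands, each at most $\mathcal Ln\cdot s_j s_{n-j}$) is precisely how that corollary is meant to be read.
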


We can now prove an inequality that suggests some certain transform of $s_n$ can be submultiplicative, in contrast to the supermultiplicative form in Proposition \ref{prop:supermulti}.

\begin{proposition}
\label{prop:submulti}
    For every $n\ge 1$ and any $m$ with $0\le m\le n$, we have
    \[
        s_n\le \left(\frac{s_1\mathcal L^2n^3}{\kappa^*}\right)^{\alpha\log n} s_{m} s_{n-m}
    \]
    where $\alpha=\left(\log\frac{\mathcal L+1}{\mathcal L}\right)^{-1}$.
\end{proposition}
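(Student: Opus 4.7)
The plan is to induct on $n$. The base case $n = 1$ is immediate: both choices $m = 0$ and $m = 1$ give $s_m s_{n-m} = s_1$, and the exponent $\alpha \log 1 = 0$ renders the leading factor equal to $1$. The endpoint cases $m = 0$ or $m = n$ for arbitrary $n \ge 2$ are likewise trivial since $s_m s_{n-m} = s_n$ and the factor $\left(\frac{s_1 \mathcal L^2 n^3}{\kappa^*}\right)^{\alpha \log n}$ is at least $1$. I therefore focus on $n \ge 2$ and $1 \le m \le n - 1$.

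Write $C = s_1 \mathcal L^2/\kappa^*$. For the inductive step I would first apply Corollary \ref{cor:decompose} to get $s_n \le \mathcal L n^2\, s_j s_{n-j}$, where $j$ lies in the forced range $[(n-1)/(\mathcal L+1),\, (\mathcal L n + 1)/(\mathcal L+1)]$; by symmetry I may assume $m \le j$. The inductive hypothesis applied to $s_j$ with the same target $m$ gives $s_j \le (Cj^3)^{\alpha \log j}\, s_m s_{j-m}$. To recombine the tail, Proposition \ref{prop:supermulti} yields $s_{j-m} s_{n-j} \le s_{n-m+1}/\kappa^*$, and Corollary \ref{cor:history} strips the offending ``$+1$'' via $s_{n-m+1} \le \mathcal L(n-m) s_1\, s_{n-m}$. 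Collecting factors (and using $\mathcal L^2 s_1/\kappa^* = C$ together with $n^2(n-m) \le n^3$) produces
\[
    s_n \le C^{1+\alpha \log j}\, n^3\, j^{3 \alpha \log j}\, s_m s_{n-m}.
\]

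It remains to show this is dominated by $C^{\alpha \log n} n^{3 \alpha \log n} s_m s_{n-m}$. Setting $\beta = \alpha \log(n/j)$ and taking logarithms in base $(\mathcal L+1)/\mathcal L$, the required inequality reduces to $\beta \ge (\log C + 3 \log n)/(\log C + 3 \log n + 3 \log j)$. When $\beta \ge 1$, i.e.\ whenever $n/j \ge (\mathcal L+1)/\mathcal L$, this is automatic. The delicate case is when $j$ sits at the very top of its forced range, where $\beta$ can fall slightly below $1$; here I would use $1 - \beta \le \alpha \log(1 + 1/(\mathcal L n))$, a quantity of order $1/n$, and bound the right-hand side below by a constant once $j \ge 2$. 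The main obstacle is precisely this quantitative balancing: the cube $n^3$ in the base of the claimed bound (rather than a mere $n^2$) is exactly the slack that rescues the argument in the regime $\beta < 1$, so one must exploit the actual width of the range of $j$ from Corollary \ref{cor:decompose} and not merely the naive bound $j < n$.
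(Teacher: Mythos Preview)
Your chain up to $s_n \le C^{1+\alpha\log j}\,n^3\,j^{3\alpha\log j}\,s_m s_{n-m}$ is correct, as is the reduction of the remaining step to $\beta \ge \dfrac{\log C + 3\log n}{\log C + 3\log n + 3\log j}$, and the case $\beta \ge 1$ goes through. The gap is in the $\beta < 1$ regime. Your estimate $1 - \beta \le \alpha\log\bigl(1 + 1/(\mathcal L n)\bigr)$ is indeed $O(1/n)$, but the complementary fraction $\dfrac{3\log j}{\log C + 3\log n + 3\log j}$, which must exceed $1-\beta$, is \emph{not} bounded below by a constant independent of $C = s_1\mathcal L^2/\kappa^*$: it tends to $0$ as $C\to\infty$ with $n$ fixed. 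Concretely, take $\ell_1 = 2$, $\kappa_1 = 1$, $\ell_2 = 1$, $\kappa_2 = K$. Then $\mathcal L = 2$, $C = 4(K+1)$, and one checks $s_1 s_3 > s_2^2$, so for $n = 4$ the argmax in Corollary~\ref{cor:decompose} lies at $i\in\{1,3\}$. With $m = 2$ your symmetry assumption ``$m\le j$'' forces $j = 3$, giving $\beta = \log(4/3)\big/\log(3/2)\approx 0.71$, and your reduced inequality fails as soon as $K\ge 12$.

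The paper sidesteps this by taking $j$ to be the \emph{smaller} of the two symmetric choices, so that $j \le n/2 \le \mathcal L n/(\mathcal L+1)$ and hence $\beta \ge 1$ always. The price is that one may then have $j < m \le n/2$; this is handled as a separate case by first applying Corollary~\ref{cor:history} to $s_{n-j}$, then the induction hypothesis to $s_{n-j-1}$ with the split $(n-m,\, m-j-1)$, and finally recombining via $s_j s_{m-j-1}\le s_m/\kappa^*$ from Proposition~\ref{prop:supermulti}. In both cases the index fed to the induction hypothesis is at most $\mathcal L n/(\mathcal L+1)$, so the exponents telescope exactly and no $\beta < 1$ regime ever arises.
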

Note that the exponent can be written as $\log n/\log \frac{\mathcal L+1}{\mathcal L}$. In other words, it is the logarithm of $n$ with base $\frac{\mathcal L+1}{\mathcal L}$. Therefore, the notation $\log$ here can have any base.
\begin{proof}
    The conclusion holds for $n\le 2$ (note that $s_2\le \mathcal L(s_1)^2$). Following induction method, we prove that the conclusion holds for every $n\ge 3$ given that it also holds for any smaller number than $n$. Note that we only need to prove the conclusion for $m\le \frac{n}{2}$ since otherwise, we simply set $m=n-m$.

    By Corollary \ref{cor:decompose}, we have
    \[
        s_n\le \mathcal L n^2 s_j s_{n-j}
    \]
    for some $j\in \left[\frac{n-1}{\mathcal L+1}, \frac{\mathcal Ln+1}{\mathcal L+1}\right]$.

    Without loss of generality, we suppose that $j\le n-j$, otherwise we set $j=n-j$. It follows that $m\le \frac{n}{2}\le n-j$.
    \begin{itemize}
        \item If $j < m\le n-j$, then applying Corollary \ref{cor:history} with $n-j\ge 2$, we have
        \begin{align*}
            s_{n-j}&\le \mathcal L (n-j-1) s_1 s_{n-j-1} \\
            &\le \mathcal Ln s_1 s_{n-j-1} \\
            &\le \mathcal Ln s_1  \left(\frac{s_1\mathcal L^2(n-j-1)^3}{\kappa^*}\right)^{\alpha\log(n-j-1)} s_{n-m} s_{m-j-1} &\text{by induction hypothesis}\\
            &\le \mathcal Ln s_1  \left(\frac{s_1\mathcal L^2n^3}{\kappa^*}\right)^{\alpha\log(n-j-1)} s_{n-m} s_{m-j-1}\\
            &\le \mathcal Ln s_1  \left(\frac{s_1\mathcal L^2n^3}{\kappa^*}\right)^{\alpha\log\frac{\mathcal Ln}{\mathcal L+1}} s_{n-m} s_{m-j-1}.
        \end{align*}
        Note that $n-j\ge\frac{n}{2}\ge \frac{3}{2}\implies n-j\ge 2\implies n-j-1\ge 1$, meanwhile $n-m\ge 0$ and $m-j-1\ge 0$, hence we can apply induction hypothesis.
        
        Totally,
        \begin{align*}
            s_n&\le \mathcal L n^2 s_j s_{n-j}\\
            &\le \mathcal Ln^2 \mathcal Ln s_1  \left(\frac{s_1\mathcal L^2n^3}{\kappa^*}\right)^{\alpha\log\frac{\mathcal Ln}{\mathcal L+1}} s_{n-m} s_{m-j-1} s_j\\
            &\le s_1\mathcal L^2n^3 \left(\frac{s_1\mathcal L^2n^3}{\kappa^*}\right)^{\alpha\log\frac{\mathcal Ln}{\mathcal L+1}} s_{n-m} \frac{1}{\kappa^*} s_m &\text{by Proposition \ref{prop:supermulti}}\\
            &=\left(\frac{s_1\mathcal L^2n^3}{\kappa^*}\right)^{\alpha(\frac{1}{\alpha} + \log n - \frac{1}{\alpha})} s_{n-m} s_m\\
            &= \left(\frac{s_1\mathcal L^2n^3}{\kappa^*}\right)^{\alpha\log n} s_m s_{n-m}.
        \end{align*}
        \item If $m\le j$, then by induction hypthesis,
        \begin{align*}
            s_j&\le \left(\frac{s_1\mathcal L^2j^3}{\kappa^*}\right)^{\alpha\log j} s_m s_{j-m} \\
            &\le \left(\frac{s_1\mathcal L^2n^3}{\kappa^*}\right)^{\alpha\log \frac{\mathcal Ln}{\mathcal L+1}} s_m s_{j-m}.
        \end{align*}
        Note that $j\le\frac{n}{2}\implies j\le \frac{\mathcal Ln}{\mathcal L+1}$.

        Totally,
        \begin{align*}
            s_n&\le \mathcal L n^2 s_j s_{n-j}\\
            &\le \mathcal Ln^2 \left(\frac{s_1\mathcal L^2n^3}{\kappa^*}\right)^{\alpha\log \frac{\mathcal Ln}{\mathcal L+1}} s_m s_{j-m} s_{n-j}\\
            &\le \mathcal Ln^2 \left(\frac{s_1\mathcal L^2n^3}{\kappa^*}\right)^{\alpha\log\frac{\mathcal Ln}{\mathcal L+1}} s_m \frac{1}{\kappa^*} s_{n-m+1} &\text{by Proposition \ref{prop:supermulti}}\\
            &\le \mathcal Ln^2 \left(\frac{s_1\mathcal L^2n^3}{\kappa^*}\right)^{\alpha\log\frac{\mathcal Ln}{\mathcal L+1}} s_m \frac{1}{\kappa^*} \mathcal L (n-m) s_1 s_{n-m} &\text{by Corollary \ref{cor:history} with $n-m+1\ge 2$}\\
            &\le \mathcal Ln^2 \left(\frac{s_1\mathcal L^2n^3}{\kappa^*}\right)^{\alpha\log\frac{\mathcal Ln}{\mathcal L+1}} s_m \frac{1}{\kappa^*} \mathcal L n s_1 s_{n-m}\\
            &= \frac{1}{\kappa^*} s_1\mathcal L^2n^3 \left(\frac{s_1\mathcal L^2n^3}{\kappa^*}\right)^{\alpha\log\frac{\mathcal Ln}{\mathcal L+1}} s_m s_{n-m}\\
            &=\left(\frac{s_1\mathcal L^2n^3}{\kappa^*}\right)^{\alpha(\frac{1}{\alpha} + \log n - \frac{1}{\alpha})} s_m  s_{n-m} \\
            &= \left(\frac{s_1\mathcal L^2n^3}{\kappa^*}\right)^{\alpha\log n} s_m s_{n-m}. 
        \end{align*}
    \end{itemize}
    The induction step finishes the proof.
\end{proof}

We now compose a transform of $s_n$ with some certain form of submultiplicativity. Writing Proposition \ref{prop:submulti} differently, for every $\ell,m\ge 0$, $\ell+m=n$, we have
\[
    s_n\le  n^{3\alpha \log n} n^{\beta} s_\ell s_m,
\]
where $\beta=\alpha\log \frac{s_1\mathcal L^2}{\kappa^*}$ (note that $x^{\alpha\log n} = n^{\alpha\log x}$).

From now on, we only consider $\ell,m\in [\frac{n}{3},\frac{2n}{3}]$.
It means $n\le 3\ell$ and $n\le 3m$, for which we have
\begin{align*}
     n^{3\alpha\log n} n^\beta s_n&\le  n^{3\alpha \log n} n^{\beta} s_\ell  n^{3\alpha \log n} n^\beta s_m\\
    &\le  (3\ell)^{3\alpha \log (3\ell)} (3\ell)^{\beta} s_\ell  (3m)^{3\alpha \log (3m)} (3m)^\beta s_m\\
    &=  3^{3\alpha \log (3\ell)} \ell^{3\alpha \log (3\ell)}  3^{\beta} \ell^\beta s_\ell  3^{3\alpha \log (3m)} m^{3\alpha \log (3m)}  3^{\beta} m^\beta s_m\\
    &=  (3\ell)^{3\alpha \log 3}\ell^{3\alpha \log 3} \ell^{3\alpha \log \ell}   3^{\beta} \ell^\beta s_\ell  (3m)^{3\alpha \log 3}m^{3\alpha \log 3} m^{3\alpha \log m} 3^{\beta} m^\beta  s_m\\
    &=  3^{3\alpha \log 3 + \beta}\ell^{2(3\alpha \log 3)} \ell^{3\alpha \log \ell}   \ell^\beta s_\ell  3^{3\alpha \log 3 + \beta} m^{2(3\alpha \log 3)} m^{3\alpha \log m} m^\beta s_m\\
    &= 3^{2(3\alpha \log 3 + \beta)}\ell^{2(3\alpha \log 3)} [\ell^{3\alpha \log \ell}   \ell^\beta s_\ell] m^{2(3\alpha \log 3)} [m^{3\alpha \log m} m^\beta s_m].
\end{align*}

Let $f(n)= n^{3\alpha\log n} n^\beta s_n$. We have
\[
    f(n)\le 3^{2(3\alpha \log 3 + \beta)}\ell^{6\alpha \log 3} f(\ell) m^{6\alpha \log 3} f(m).
\]
It follows that
\begin{align*}
    n^{12\alpha\log 3}f(n)&\le 3^{2(3\alpha \log 3 + \beta)}n^{6\alpha\log 3}\ell^{6\alpha \log 3} f(\ell)n^{6\alpha\log 3} m^{6\alpha \log 3} f(m)\\
    &\le 3^{2(3\alpha \log 3 + \beta)}(3\ell)^{6\alpha\log 3}\ell^{6\alpha \log 3} f(\ell) (3m)^{6\alpha\log 3} m^{6\alpha \log 3} f(m)\\
    &\le 3^{2(3\alpha \log 3 + \beta)} 3^{6\alpha\log 3}\ell^{12\alpha \log 3} f(\ell) 3^{6\alpha\log 3} m^{12\alpha \log 3} f(m)\\
    &\le 3^{18\alpha \log 3 + 2\beta} \ell^{12\alpha \log 3} f(\ell) m^{12\alpha \log 3} f(m).
\end{align*}
This implies
\[
    3^{18\alpha \log 3 + 2\beta} n^{12\alpha\log 3}f(n) \le 3^{18\alpha \log 3 + 2\beta} \ell^{12\alpha \log 3} f(\ell) 3^{18\alpha \log 3 + 2\beta}m^{12\alpha \log 3} f(m).
\]
In other words, the sequence
\[
    s'_n=3^{18\alpha \log 3 + 2\beta}  n^{3\alpha\log n + 12\alpha\log 3} n^\beta s_n
\]
is weakly submultiplicative, in the sense that $s'_n\le s'_ms'_{n-m}$ for any $m\in [\frac{n}{3},\frac{2n}{3}]$. However, we can still apply the generalized Fekete lemma \cite{de1952some} to sequences with this condition and obtain
\[
    \lambda=\lim_{n\to\infty} \sqrt[n]{s_n}=\lim_{n\to\infty} \sqrt[n]{s'_n} = \inf_n \sqrt[n]{s'_n} = \inf_n \sqrt[n]{3^{18\alpha \log 3 + 2\beta}  n^{3\alpha\log n + 12\alpha\log 3 + \beta} s_n}.
\]

Combining with Corollary \ref{cor:rephrase-lower-bound}, for every $n\ge 2$, we have
\[
    \sqrt[n]{\frac{\kappa^*}{\mathcal L(n-1) s_1} s_n} \le \lambda \le \sqrt[n]{3^{18\log 3 + 2\log\frac{s_1\mathcal L^2}{\kappa^*}}  n^{3\log n + 12\log 3 + \log\frac{s_1\mathcal L^2}{\kappa^*}} s_n},
\]
where the logarithm has the base $\frac{\mathcal L+1}{\mathcal L}$.
This concludes Theorem \ref{thm:main}.
\bibliographystyle{unsrt}
\bibliography{compfold}

\end{document}